\DeclareMathOperator{\im}{im}
\DeclareMathOperator{\M}{\mathcal{M}}
\DeclareMathOperator{\K}{\mathcal{K}}
\DeclareMathOperator{\rank}{rank}
\def\matrices{M_{m\times n}(K)}
\theoremstyle{plain}
\newtheorem{theorem}{Theorem}
\newtheorem{corollary}{Corollary}
\newtheorem{lemma}{Lemma}
\theoremstyle{definition}
\begin{document}
\title[Constant rank subspaces of matrices]
{A dimension bound for constant rank\\ 
subspaces of matrices over a finite field}

\author[R. Gow]{Rod Gow}
\address{School of Mathematical Sciences\\
University College Dublin\\
 Ireland}
\email{rod.gow@ucd.ie}

\keywords{matrix, rank, constant rank subspace, skew-symmetric matrix, character.}
\subjclass{15A03, 15A33}

\begin{abstract} 
Let $K$ be a field and let $\matrices$  denote the space of $m\times n$ matrices over $K$, where $m\leq n$. We say that  a non-zero subspace $\M$ of $\matrices$ is a constant rank $r$ subspace if each non-zero element of
$\M$ has rank  $r$, where $1\leq r\leq m$. We show in this paper that if $K$ is a finite field with $|K|\geq r+1$, any constant
rank $r$ subspace of $\matrices$ has dimension at most $n$. 
\end{abstract}
\maketitle
\section{Introduction}
\noindent  Let $K$ be a field and, when
$q$ is a power of a prime, let $\mathbb{F}_q$ be the finite field of order $q$. 
Let $m$ and $n$  be positive integers with $m\le n$. Let $\matrices$ denote the vector space of $m\times n$ matrices with entries in $K$. When $m=n$, we write 
$M_n(K)$ in place of $M_{n\times n}(K)$. The \emph{rank} of an element $A$ of $\matrices$ is an integer $s$ satisfying $0\leq s\leq m$, and, of course, $s=0$ if and only if $A$ is the zero matrix. 

Let $\M$ be a non-zero $K$-subspace of $\matrices$ and let $r$ be a positive integer satisfying $r\leq m$. We say that $\M$ is a \emph{constant rank} $r$ subspace if the rank of each non-zero element of $\M$ is $r$. A question that has received considerable attention in the research literature during several decades is the following: what is the largest dimension, as a function of $m$, $n$ and $r$, of a constant rank
$r$ subspace of $\matrices$? As might be expected, the answers depend on the field $K$. 

On the basis of various examples and general constructions, it is tempting to conjecture that
a constant rank $r$ subspace of $\matrices$ has dimension at most $n$. This bound would be optimal for finite fields, since if $r$ is a positive integer not greater than $m$, $M_{m\times n}(\mathbb{F}_q)$ contains
a constant rank $r$ subspace of dimension $n$. See, for example, Theorem 4 of \cite{DGMS}. 

However, it has been known since 1999 that this conjecture is false.
Nonetheless, at the time of writing, we know of only four counterexamples, occurring in $M_3(\mathbb{F}_2)$, $M_4(\mathbb{F}_2)$, 
$M_5(\mathbb{F}_2)$ and
$M_{4\times 5}(\mathbb{F}_2)$ (they are constant rank 2 or 3 subspaces, and the $M_5(\mathbb{F}_2)$ example
is obtained from the $M_{4\times 5}(\mathbb{F}_2)$ example by adding a zero row to each matrix). See, for example, \cite{B}, \cite{Bo} and \cite{DGMS}, 
Example 1. We note that the field involved
in these counterexamples is the smallest one, $\mathbb{F}_2$. This is partly because counterexamples are usually found by exhaustive computer searches of matrices, which are almost impossible to implement for fields of size larger than 3. It is also suspected that any counterexamples
to the conjectured upper bound of $n$ for the dimension of a constant rank $r$ space only occur over fields of size at most $r$.

In 1990, Beasley and Laffey published a paper, \cite{BL}, in which it was claimed that the maximum dimension of a constant rank
$r$ subspace of $\matrices$ is $n$, provided that $|K|\geq r+1$. In a subsequent erratum, published in 1993, the authors noted that their
theorem was only proved subject to the additional assumption that $n\ge 2r-1$. We are not sure of the nature of the error in the argument, as this was not specified in the erratum, and we had difficulty following the proof.

The purpose of the present paper is to prove that the original theorem of Beasley and Laffey is true, provided $K$ is finite, and $|K|\geq r+1$.
The restriction on the size of $K$ seems difficult to omit, and as we noted above, some condition on $|K|$ is required. Our proof
of the upper bound is rather different from that of Beasley and Laffey. We use a general principle that holds for constant rank subspaces
over all fields and then apply a simple counting technique for our main
contradiction.

\section{Proofs of main results}

\noindent 

\noindent We first note that it suffices to prove our dimension bound for a constant rank subspace of $\matrices$ in the case that $m=n$. For let
$\M$ be a constant rank $r$ subspace of $\matrices$, and suppose that $m<n$. We extend each element $A$ of $\M$ to an element $A'$ of $M_n(K)$
by adding an additional $n-m$ zero rows to the bottom of $A$. It is clear that $A$ and $A'$ have the same rank. Let $\M'$ be the subset of
$M_n(K)$ consisting of all such elements $A'$, as $A$ ranges over $\M$. $\M'$ is a $K$-subspace of $M_n(K)$ and $\dim \M'=\dim \M$. Thus, if we can prove that $\dim \M'\leq n$, we will have the same bound for $\dim \M$. 

The following result plays a key role in many investigations concerning the rank of elements in subspaces of matrices. It is proved in
\cite{Fill}, Lemma 1, and \cite{Sweet}, Theorem 1. Both of these papers give a necessary lower bound for $|K|$ larger than the one we quote, 
but the authors of each paper note in separate comments that the result holds for $|K|\geq r+1$, as we may easily verify by studying the proofs.

\begin{lemma} \label{image_of_kernel} Let $K$ be a field and let $\M$ be a non-zero subspace of $M_n(K)$. Let $A\in \M$ have the property that
$r=\rank A\geq \rank B$ for all $B\in \M$. Then if $|K|\geq r+1$, $B u\in \im A$ for each element $u$ of $\ker A$ and each element $B$ of
$\M$. (Note that $\im A$ denotes the image space of $A$.)
 
\end{lemma}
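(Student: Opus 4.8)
The plan is to reduce to the case $u\neq 0$ and argue by contradiction: suppose $Bu\notin\im A$ for some $B\in\M$ and some $0\neq u\in\ker A$. The governing idea is to examine the rank of the pencil $A+tB$ (more precisely of $A-tB$) for $t\in K$ and exploit that $\rank(A+tB)\leq r$ for every scalar $t$, together with the maximality of $\rank A=r$. First I would choose a complement and work with a block decomposition adapted to $\ker A$ and $\im A$: pick a basis so that the last column of every matrix is its action on $u$, i.e. write matrices acting on $K^n$ with $u$ spanning the last coordinate line, and choose the codomain basis so that $\im A$ is a fixed $r$-dimensional coordinate subspace. In these coordinates $Au=0$, so the last column of $A$ is zero, and $A$ has rank $r$ with image in the first $r$ coordinates; the hypothesis $Bu\notin\im A$ says the last column of $B$ has a nonzero entry outside those first $r$ rows.

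Next I would track what the rank bound forces. Consider the $n\times(r+1)$ (or suitably sized) submatrix of $A+tB$ obtained by taking the columns spanning $\im A$-related data together with the last column; since $A$ contributes a full rank $r$ block there and $B$ contributes, in the last column, a vector with a component transverse to $\im A$, for all but at most a bounded number of values of $t$ the matrix $A+tB$ would acquire rank $r+1$ in that submatrix, contradicting constant rank $r$ — \emph{unless} the relevant minors vanish identically in $t$. Expanding the pertinent $(r+1)\times(r+1)$ minors of $A+tB$ as polynomials in $t$: the $t^0$ coefficient vanishes because $A$ has rank $r$ (any $(r+1)$ columns of $A$ are dependent), and the top-degree coefficient in $t$ involves the transverse component of $Bu$. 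Requiring the polynomial to be identically zero on $K$ — which is where the hypothesis $|K|\geq r+1$ enters, since a nonzero polynomial of degree $\leq r$ has at most $r$ roots, so if it vanishes at the $\geq r+1$ points of $K$ it is the zero polynomial — forces its leading coefficient to be zero, and one reads off from that vanishing that the transverse component of $Bu$ must itself be zero. That is the desired contradiction.

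The main obstacle, and the step I would spend the most care on, is organizing the block/minor bookkeeping so that the polynomial in $t$ whose coefficients we control genuinely has degree at most $r$ and has its leading coefficient equal to (a nonzero multiple of) the transverse component of $Bu$ times a nonvanishing $r\times r$ minor of $A$. The clean way is: fix $r$ columns of $A$ that are linearly independent (they exist since $\rank A=r$), say they form an $n\times r$ matrix whose row space, after the codomain choice, has a nonsingular $r\times r$ top block; adjoin the $u$-column of $A+tB$, which is $t(Bu)$; and look at the $(r+1)\times(r+1)$ minors formed from these $r+1$ columns and $r+1$ rows (the $r$ rows of the nonsingular block plus one row where $Bu$ is transverse). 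Each such minor is linear in $t$ — the $u$-column is $t\cdot(Bu)$ and the other columns are independent of $t$ up to the fixed $tB$ shift, but choosing the $r$ fixed columns to lie among columns where we only need their $A$-part will keep the degree controlled; after Laplace expansion along the $u$-column the minor equals $t$ times an $r\times r$ minor of $[\,\text{those }r\text{ columns of }A\,]$ plus lower-order corrections, and the leading term is $t\cdot(\text{transverse entry of }Bu)\cdot(\text{nonsingular }r\times r\text{ minor of }A)$. Getting the degree bound exactly right so that ``$\geq r+1$ roots'' is the correct threshold is the delicate point; everything else is linear algebra and a one-line polynomial argument.
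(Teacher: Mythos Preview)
The paper does not give its own proof of this lemma; it cites \cite{Fill} and \cite{Sweet}, remarking that those sources prove it with a larger bound on $|K|$ but note in comments that $|K|\geq r+1$ suffices. So there is no ``paper proof'' to compare against, and your outline must be judged on its own.

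Your overall strategy --- look at the pencil $A+tB$, use maximality of $\rank A$ to get $\rank(A+tB)\le r$ for all $t\in K$, then find an $(r+1)\times(r+1)$ minor that cannot vanish identically --- is exactly the standard route. But there is a genuine gap in your bookkeeping. You propose to take $r$ linearly independent columns \emph{of $A$} and adjoin the $u$-column of $A+tB$. The resulting $n\times(r+1)$ matrix is not a submatrix of $A+tB$, so the rank bound $\rank(A+tB)\le r$ says nothing about it; your sentence ``choosing the $r$ fixed columns to lie among columns where we only need their $A$-part will keep the degree controlled'' is where the argument slips. If instead you use the honest submatrix of $A+tB$ with those same column indices $J\cup\{n\}$ (after arranging $u=e_n$) and row indices $I'=I\cup\{i_0\}$, then each of the $r+1$ columns is linear in $t$, and the minor is a polynomial of degree at most $r+1$, not $r$. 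Its constant term vanishes (it is an $(r+1)$-minor of $A$), so it factors as $t\cdot p(t)$ with $\deg p\le r$; but then $p$ is only forced to vanish on $K\setminus\{0\}$, which has $|K|-1$ elements. Your argument thus yields only $|K|\ge r+2$, which is precisely the ``larger bound'' the paper alludes to in the cited sources.

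The missing idea that recovers $|K|\ge r+1$ is to use the hypothesis $\rank B\le r$ a second time: the coefficient of $t^{r+1}$ in the minor is the corresponding $(r+1)$-minor of $B$, which therefore vanishes. Hence the minor itself has degree at most $r$, and vanishing at all $|K|\ge r+1$ elements of $K$ forces it to be identically zero. On the other hand, a direct multilinear expansion shows the coefficient of $t^1$ equals $\det\bigl[A[I',J]\,\big|\,(Bu)[I']\bigr]$ (the other contributions to the $t^1$ term contain the zero column $A[I',n]$), and this is exactly the determinant you arranged to be nonzero. That is the desired contradiction. So your plan is correct in spirit; you just need to work with the genuine submatrix of $A+tB$ and invoke $\rank B\le r$ to drop the degree from $r+1$ to $r$.
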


In the counterexamples to the dimension bound for constant rank subspaces, Lemma \ref{image_of_kernel} does not hold. Thus the size of the field is vital in this type of result.

\begin{lemma} \label{dimension_bound_for_kernel}
Let $\M$ be a constant rank $r$ subspace of $M_n(K)$ and suppose, if possible, that $\dim \M=n+1$. 
Let $u$ be a non-zero element of $K^n$ and let $\K_u=\{ A\in \M: Au =0\}$. Then if $|K|\geq r+1$, we have $\dim \K_u\geq n+1-r$.
\end{lemma}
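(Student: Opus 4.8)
The statement asks for a lower bound on the dimension of the "kernel slice" $\K_u = \{A \in \M : Au = 0\}$. I want to exhibit a linear map from $\M$ (or from a suitable subspace) whose kernel is exactly $\K_u$, and whose image lies in a space of dimension at most $r$; then $\dim \K_u \geq \dim \M - r = (n+1) - r$, which is the claim.

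**The map.** The obvious linear map is the evaluation $\varepsilon_u : \M \to K^n$, $A \mapsto Au$. Its kernel is precisely $\K_u$, so $\dim \K_u = \dim \M - \dim(\im \varepsilon_u) = (n+1) - \dim(\im \varepsilon_u)$. So the whole problem reduces to showing $\dim(\im \varepsilon_u) \leq r$, i.e. the vectors $\{Au : A \in \M\}$ span a subspace of $K^n$ of dimension at most $r$. This is where Lemma \ref{image_of_kernel} must come in.

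**Bringing in Lemma \ref{image_of_kernel}.** Since $\M$ is a constant rank $r$ subspace, pick any non-zero $A_0 \in \M$; then $\rank A_0 = r$ is maximal in $\M$ (everything non-zero has rank exactly $r$), so the hypotheses of Lemma \ref{image_of_kernel} are satisfied with this $A_0$. The lemma tells us that $B v \in \im A_0$ for every $v \in \ker A_0$ and every $B \in \M$. The difficulty is that $u$ need not lie in $\ker A_0$, so I cannot directly conclude $Bu \in \im A_0$ for the given $u$. The plan to fix this: I would like to choose the "maximal rank" element $A_0$ so that $u \in \ker A_0$. If there exists some non-zero $A_0 \in \M$ with $A_0 u = 0$ — i.e. if $\K_u \neq 0$ — then $u \in \ker A_0$, and the lemma gives $Bu \in \im A_0$ for all $B \in \M$, hence $\im \varepsilon_u \subseteq \im A_0$, which has dimension $r$, and we are done. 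So the remaining case to rule out is $\K_u = 0$; but then $\varepsilon_u$ is injective, forcing $\dim \M = n+1 \leq n$, a contradiction — so $\K_u \neq 0$ always holds (given $\dim \M = n+1 > n$), and in fact this argument simultaneously shows $\K_u$ is non-zero and that the bound holds.

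**The main obstacle.** The one genuinely delicate point is the case split and making sure the constant-rank hypothesis is used correctly: I must confirm that \emph{every} non-zero element of a constant rank $r$ subspace is an element of maximal rank $r$, so that Lemma \ref{image_of_kernel} applies to whichever non-zero $A_0 \in \K_u$ I pick. Granting that (which is immediate from the definition of constant rank $r$), the argument is short: from $\dim \M = n+1$ and injectivity failing, $\K_u$ contains a non-zero $A_0$; apply Lemma \ref{image_of_kernel} with this $A_0$ and $u \in \ker A_0$ to get $\im \varepsilon_u \subseteq \im A_0$; conclude $\dim \K_u = (n+1) - \dim \im \varepsilon_u \geq (n+1) - r$. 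I expect the author's proof to run along exactly these lines.
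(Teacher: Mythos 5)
Your proposal is correct and matches the paper's proof essentially verbatim: the author also defines the evaluation map $T_u(B)=Bu$, uses $\dim\M=n+1>n$ and rank--nullity to find a non-zero $A\in\K_u$, applies Lemma \ref{image_of_kernel} with $u\in\ker A$ to get $\M u\subseteq\im A$ (dimension $r$), and concludes by rank--nullity. Your observation that every non-zero element of a constant rank $r$ subspace automatically has maximal rank, so the lemma applies to any non-zero $A\in\K_u$, is exactly the point the paper relies on as well.
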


\begin{proof} We define a $K$-linear transformation $T_u:\M\to K^n$ by setting
\[
 T_u(B)=Bu
\]
for all $B\in \M$. It is clear that $\K_u=\ker T_u$. Since $\dim \M>n$, the rank-nullity theorem implies that $\K_u\neq 0$. Thus there exists a non-zero element
$A$, say, of $\M$ with $Au=0$. Since $\M$ is a constant rank $r$ subspace, $|K|\geq r+1$ and $u\in \ker A$, Lemma \ref{image_of_kernel} implies that $\M u\leq \im A$. Now, since $A$ has rank $r$, $\dim \im A=r$ and thus we deduce that
\[
 \dim \M u=\dim T_u(\M)\leq r.
\]
However, we also have
\[
 n+1=\dim \M=\dim T_u(\M)+\dim \K_u\leq r+\dim \K_u.
\]
It follows that $\dim \K_u\geq n+1-r$, as required.
\end{proof}

We can now proceed to the proof of our main theorem.

\begin{theorem} \label{largedimension} Let $q$ be a power of a prime and let $\M$ be a constant rank $r$ subspace of $M_n(\mathbb{F}_q)$, where
$1\leq r\leq n$. Suppose that $q\geq r+1$. Then we have $\dim \M\leq n$.
\end{theorem}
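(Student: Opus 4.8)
The plan is to argue by contradiction. Suppose $\dim\M\geq n+1$; since every subspace of a constant rank $r$ subspace is again a constant rank $r$ subspace, we may pass to a subspace and assume $\dim\M=n+1$, so that Lemma~\ref{dimension_bound_for_kernel} applies to $\M$. Consider the set $P$ of all pairs $(u,A)$ where $u$ is a non-zero vector of $\mathbb{F}_q^n$, $A$ is a non-zero matrix in $\M$, and $Au=0$, and count $|P|$ in two ways. Counting over the matrices: each non-zero $A\in\M$ has rank exactly $r$, so $\ker A$ has dimension $n-r$ and hence contains exactly $q^{n-r}-1$ non-zero vectors; since $\M$ has $q^{n+1}-1$ non-zero matrices, this gives $|P|=(q^{n+1}-1)(q^{n-r}-1)$. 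Counting over the vectors: for each non-zero $u$, the matrices $A\in\M$ with $Au=0$ form the subspace $\K_u$, which by Lemma~\ref{dimension_bound_for_kernel} has dimension at least $n+1-r$, hence contains at least $q^{n+1-r}-1$ non-zero matrices; summing over the $q^n-1$ non-zero vectors $u$ yields $|P|\geq(q^n-1)(q^{n+1-r}-1)$. The hypothesis $q\geq r+1$ enters precisely through this appeal to Lemma~\ref{dimension_bound_for_kernel}.

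Comparing the two expressions gives $(q^{n+1}-1)(q^{n-r}-1)\geq(q^n-1)(q^{n+1-r}-1)$. On expanding, the terms $q^{2n+1-r}$ and $1$ appear on both sides and cancel, leaving $q^n+q^{n+1-r}\geq q^{n+1}+q^{n-r}$; dividing by $q^{n-r}$ this becomes $q^r+q\geq q^{r+1}+1$, i.e. $(q-1)(q^r-1)\leq 0$, which is false for every prime power $q\geq 2$ and every integer $r\geq 1$. This contradiction proves $\dim\M\leq n$. (In the extreme case $r=n$ the first count is $0$ because $\ker A=0$ for invertible $A$, but then the displayed inequality reads $0\geq(q^n-1)(q-1)>0$, so the general argument still applies.)

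I expect the only genuine difficulty to be spotting the right quantity to count: once one decides to count the incidences $(u,A)$ with $Au=0$, one side of the count is forced by the constant rank hypothesis and the other side is exactly the content of Lemma~\ref{dimension_bound_for_kernel}, and the resulting numerical inequality then collapses in a single line. Everything after choosing $P$ is routine bookkeeping.
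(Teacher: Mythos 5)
Your proposal is correct and follows essentially the same route as the paper: the same double count of the incidence set $\{(A,u): A\in\M\setminus\{0\},\ u\neq 0,\ Au=0\}$, with Lemma~\ref{dimension_bound_for_kernel} supplying the lower bound $\dim\K_u\geq n+1-r$. The only (cosmetic) difference is in extracting the contradiction: you compare the exact count with the lower bound and reduce to the false inequality $(q-1)(q^r-1)\leq 0$, whereas the paper observes that $q^{n+1-r}$ divides $\sum_{u\neq 0}q^{r(u)}$ but not the left-hand side; both closings are valid and equally short.
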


\begin{proof} The result is clear if $r=n$ (and requires no assumption that $q\geq r+1$). Thus, we will assume that $r<n$. Suppose if possible
that $\dim \M>n$. Then we may as well assume that $\dim \M=n+1$, and proceed to derive a contradiction.

Let $\Omega$ be the set of pairs $(A,u)$, where $A$ is a non-zero element of $\M$, and $u$ is a non-zero element of $\mathbb{F}_q^n$ with $Au=0$.
We evaluate $|\Omega|$ by a double counting procedure.  Since each element $A$ of $\M$ has rank $r$, there are $q^{n-r}-1$ non-zero elements
$v$ with $Av=0$. Thus, as $\M$ contains $q^{n+1}-1$ non-zero elements, we see that
\[
 |\Omega|=(q^{n+1}-1)(q^{n-r}-1).
\]
Let $u$ be any non-zero element of $\mathbb{F}_q^n$. The number of non-zero elements $B\in\M$ satisfying $Bu=0$ is $q^{r(u)}-1$, where
$r(u)=\dim \K_u$, and $\K_u$ is defined in Lemma \ref{dimension_bound_for_kernel}. We note that as $q\geq r+1$, $r(u)\geq n+1-r$ by the same lemma.
We thus obtain
\[
 |\Omega|=\sum_{u\neq 0}(q^{r(u)}-1)=-(q^n-1)+\sum_{u\neq 0} q^{r(u)}.
\]

Our two counts yield
\[
 (q^{n+1}-1)(q^{n-r}-1)=-(q^n-1)+\sum_{u\neq 0} q^{r(u)}
\]
and we can rearrange the equation to see that
\[
 q^{2n+1-r}-q^{n-r}-q^{n+1}+q^n=\sum_{u\neq 0} q^{r(u)}.
\]
Clearly, $q^{n-r}$ divides the left hand side, but $q^{n+1-r}$ does not divide it. On the other hand, since $r(u)\geq n+1-r$ for all non-zero
$u$, $q^{n+1-r}$ divides the right hand side. This is a contradiction. Thus, $\dim \M\leq n$, as required.

\end{proof}

\begin{corollary} \label{general_theorem}
 
Let $q$ be  a power of a prime and let $\M$ be a constant rank $r$ subspace of $M_{m\times n}(\mathbb{F}_q)$, where
$1\leq r\leq m$. Suppose that $q\geq r+1$. Then we have $\dim \M\leq n$.
\end{corollary}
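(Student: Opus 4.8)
The plan is to reduce the rectangular case directly to the square case treated in Theorem~\ref{largedimension}, exploiting the zero-row padding that was already isolated in the opening paragraph of this section. So suppose $\M$ is a constant rank $r$ subspace of $M_{m\times n}(\mathbb{F}_q)$ with $1\leq r\leq m$ and $q\geq r+1$. If $m=n$ there is nothing further to do: the hypotheses $1\leq r\leq n$ and $q\geq r+1$ of Theorem~\ref{largedimension} hold, so $\dim\M\leq n$ at once. Hence we may assume $m<n$.

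For $m<n$, I would define the map $\phi\colon \matrices \to M_n(\mathbb{F}_q)$ sending $A$ to the matrix $A'$ obtained by adjoining $n-m$ zero rows at the bottom of $A$, and verify three routine facts: (i) $\phi$ is $\mathbb{F}_q$-linear; (ii) $\phi$ is injective, since $A$ is recovered as the top $m$ rows of $A'$; and (iii) $\rank A'=\rank A$, because appending zero rows changes neither the column space nor its dimension. Granting these, $\M':=\phi(\M)$ is an $\mathbb{F}_q$-subspace of $M_n(\mathbb{F}_q)$ with $\dim \M'=\dim \M$, and every non-zero element of $\M'$ has the form $A'$ with $A\neq 0$ in $\M$, hence has rank $r$. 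Thus $\M'$ is a constant rank $r$ subspace of $M_n(\mathbb{F}_q)$.

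It then remains only to apply Theorem~\ref{largedimension} to $\M'$. Since $1\leq r\leq m\leq n$ we have $1\leq r\leq n$, and $q\geq r+1$ by hypothesis, so the theorem gives $\dim \M'\leq n$. Therefore $\dim \M=\dim \M'\leq n$, which is the assertion of the corollary.

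I do not anticipate any genuine obstacle here: the substantive content of the corollary is precisely the rank-preserving embedding of $\matrices$ into $M_n(\mathbb{F}_q)$, and that observation was already made (and justified) at the start of Section~2. The only items requiring explicit checking are the injectivity and rank-invariance of $\phi$, both of which are immediate from the block structure of $A'$.
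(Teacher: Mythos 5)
Your proposal is correct and is essentially the paper's own argument: the reduction of the rectangular case to the square case by padding with $n-m$ zero rows is carried out in the opening paragraph of Section~2, after which Theorem~\ref{largedimension} is applied exactly as you do. The routine checks you list (linearity, injectivity, rank preservation) are all that is needed, and your treatment of them is fine.
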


Note that Corollary \ref{general_theorem} holds without restriction on $q$ when $r=m$, as may easily be proved without use of counting arguments.
For arbitrary $q$, we showed in \cite{DGMS}, Theorem 6, that a constant rank $r$ subspace of $M_{m\times n}(\mathbb{F}_q)$ has dimension at most
$m+n-r$. This result can be proved by elementary double counting arguments, without recourse to the use of characters of finite abelian
groups. This general upper bound is occasionally optimal in cases when the hypothesis of Corollary \ref{general_theorem} does not hold.

We hope to investigate constant rank subspaces of symmetric matrices over $\mathbb{F}_q$ using ideas similar to those developed in this paper.
More detailed results can be obtained in the symmetric case, as we have more arithmetic information coming from the geometry of symmetric bilinear forms.

\end{document}